\documentclass[pdflatex,sn-mathphys-num]{sn-jnl}

\usepackage{graphicx}%
\usepackage{multirow}%
\usepackage{amsmath,amssymb,amsfonts}%
\usepackage{mathtools}
\usepackage[title]{appendix}%
\usepackage{xcolor}%
\usepackage{textcomp}%
\usepackage{manyfoot}%
\usepackage{booktabs}%
\usepackage{algorithm}%
\usepackage{algorithmicx}%
\usepackage{algpseudocode}%
\usepackage{listings}%
\usepackage{comment}
\usepackage{tikz}
\usepackage{pgfplots}
\pgfplotsset{compat=1.18}
\usepackage{bm}
\usepackage{cleveref}

\theoremstyle{thmstyleone}%
\newtheorem{theorem}{Theorem}%
\newtheorem{proposition}[theorem]{Proposition}%
\newtheorem{lemma}[theorem]{Lemma}

\theoremstyle{thmstyletwo}%
\newtheorem{remark}{Remark}%

\theoremstyle{thmstylethree}%

\newcommand{\ignore}[1]{}

\begin{document}

\title[]{Forest Characterization of When SDP-RLT Strengthens RLT for Bilinear Terms
}

\author[1]{\fnm{Hayato} \sur{Watanabe}}\email{watanabe.h.560f@m.isct.ac.jp}

\author[2]{\fnm{Takami} \sur{Sato}}\email{takami.sato@yos.elec.keio.ac.jp}

\author[1]{\fnm{Kazuhide} \sur{Nakata}}\email{nakata.k.ac@m.titech.ac.jp}

\author*[1]{\fnm{Ken} \sur{Kobayashi}}\email{kobayashi.k@iee.eng.isct.ac.jp}

\affil*[1]{\orgdiv{Department of Industrial Engineering and Economics}, \orgname{Institute of Science Tokyo}, \orgaddress{\street{2-12-1 Ookayama}, \city{Meguro-ku}, \postcode{152-8550}, \state{Tokyo}, \country{Japan}}}

\affil[2]{\orgdiv{Department of Electronics and Electrical Engineering}, \orgname{Keio University}, \orgaddress{\street{3-14-1 Hiyoshi, Kohoku-ku}, \city{Yokohama}, \postcode{223-8522}, \state{Kanagawa}, \country{Japan}}}

\abstract{
We study two relaxations for bilinear terms in box-constrained variables: the reformulation-linearization technique (RLT) relaxation and the SDP-RLT relaxation, which combines RLT with semidefinite programming (SDP).
We represent the pattern of bilinear terms by an undirected graph, with one vertex for each variable and one edge for each bilinear term.
A point feasible for the RLT relaxation fixes only part of the lifted matrix of the SDP-RLT relaxation, leaving the remaining entries to be chosen subject to the diagonal RLT inequalities, and the two relaxations differ exactly when these entries cannot always be chosen so that the matrix becomes positive semidefinite.
Formulating this as a positive semidefinite matrix completion problem, we show that the feasible regions of the two relaxations coincide if and only if the graph is a forest.
We also measure how large the difference can be when the graph contains a complete bipartite subgraph: a bilinear objective given by a Hadamard matrix yields optimal values whose ratio grows with the size of that subgraph.
}

\keywords{Bilinear terms, RLT relaxation, SDP-RLT relaxation, Positive semidefinite matrix completion, Chordal graphs}

\maketitle

\section{Introduction}\label{sec1}

We consider optimization problems involving bilinear terms $x_ix_j$ between distinct components of box-constrained variables $\bm{x}\in\mathbb{R}^{n}$.
A common reformulation introduces an auxiliary variable $w_{ij}$ for each bilinear term, so that the nonconvexity is concentrated in the equalities $w_{ij}=x_ix_j$, which are then replaced by convex relaxations.
The reformulation-linearization technique (RLT)~\citep{sherali1995reformulation,tawarmalani2002convexification} gives a linear relaxation consisting of the McCormick inequalities~\citep{mccormick1976computability} for each bilinear term.
The semidefinite programming (SDP) relaxation instead lifts $(1,\bm{x}^{\top})^{\top}$ to a symmetric matrix variable and imposes a positive semidefinite constraint while dropping the rank-one condition~\citep{bao2011sdp}.
The SDP-RLT relaxation imposes the McCormick inequalities, the positive semidefinite constraint, and the diagonal RLT inequalities together~\citep{anstreicher2009sdp,bao2011sdp}, and hence its feasible region is contained in that of the RLT relaxation.

In this paper, we examine when the SDP-RLT relaxation is strictly stronger than the RLT relaxation from the perspective of positive semidefinite completion.
A point feasible for the RLT relaxation is also feasible for the SDP-RLT relaxation if and only if the lifted matrix can be made positive semidefinite.
For a point feasible for the RLT relaxation, the entries of the lifted matrix corresponding to $\bm{x}$ and the bilinear terms of the problem are determined, while the remaining entries may be chosen subject to the diagonal RLT inequalities.
The two relaxations therefore differ exactly when, at some RLT-feasible point, no such choice of the remaining entries makes the matrix positive semidefinite, which is a positive semidefinite completion problem.

To state our main result, we represent the pattern of bilinear terms by a simple undirected graph $G(V,E)$, whose vertices $V=\{1,\dots,n\}$ index the variables and whose edges correspond to the bilinear terms appearing in the problem.
Our main result is an exact graph characterization: the feasible regions of the RLT and SDP-RLT relaxations coincide if and only if $G(V,E)$ is a forest.
Known results on the Boolean quadric polytope imply that, for a forest, the RLT relaxation describes the convex hull of the bilinear constraint set, and hence the two relaxations coincide~\citep{padberg1989boolean}.
However, for a graph containing a cycle, the failure of RLT to describe this convex hull does not by itself imply that SDP-RLT is strictly stronger.
In this paper, we establish the converse by constructing, for every graph containing a cycle, an RLT-feasible point whose associated partial matrix has no positive semidefinite completion.
For the forest case, we give a direct proof using the classical completion theorem for chordal patterns~\citep{grone1984completion}, thereby treating both directions within the same completion framework.
We also measure how much the two feasible regions can differ when $G(V,E)$ is not a forest.
We show that if $G(V,E)$ contains a complete bipartite subgraph, then for a certain bilinear objective function, the ratio between the optimal values of the two relaxations grows with the size of that subgraph.

The rest of this paper is organized as follows.
\Cref{sec:completion} collects the notions and the known results on positive semidefinite matrix completion.
\Cref{sec:relaxations} introduces the RLT and SDP-RLT relaxations for bilinear terms in box-constrained variables.
\Cref{sec:characterization} proves the forest--cycle characterization, and \Cref{sec:hadamard-illustration} measures the gap between the two relaxations when $G$ contains a complete bipartite subgraph.
\Cref{sec:conclusion} concludes the paper.

\subsection{Related work}

The convex envelopes of multilinear functions and the strength of McCormick relaxations have been studied in several works~\citep{rikun1997convex,luedtke2012some,boland2017gap}.
In particular, \citet{boland2017gap} showed that the McCormick relaxation describes the convex hull of the graph of a bilinear function if and only if every cycle has an even number of positive and an even number of negative edges, and used a Hadamard matrix to construct instances on which the two differ.
Related questions for binary variables have been studied through the Boolean quadric polytope~\citep{padberg1989boolean,Gupte2020,delpia2018multilinear}.
These works compare the McCormick relaxation with the convex hull, whereas we compare the RLT and SDP-RLT relaxations.

For nonconvex quadratically constrained quadratic programs, \citet{anstreicher2009sdp} compared the RLT and SDP-RLT relaxations in the lifted space, and showed that, for the five-dimensional lifted region associated with two original variables, adding the positive semidefinite constraint reduces the volume of the RLT feasible region by three quarters.
Several works also give graph-based conditions under which SDP relaxations are exact~\citep{sojoudi2014exactness,azuma2020forest,azuma2022exact}.
More recently, \citet{qiu2024exact} gave algebraic characterizations of box-constrained quadratic programs whose RLT or SDP-RLT relaxation is exact.
Their notion of exactness compares a relaxation value with the optimal value of a fixed nonconvex instance and hence depends on the objective data, whereas we fix the graph of bilinear terms and ask whether the feasible regions of the two relaxations~coincide.

Bipartite bilinear programs have also received considerable attention.
\citet{dey2019socp} derived a second order cone representable relaxation from the convex hull of a set defined by a single bipartite bilinear equality, whose full formulation can require an exponential number of conic constraints; \citet{gu2021lifting} derived lifted bilinear cover inequalities, and \citet{dey2026aggregation} studied relaxations obtained by aggregating bipartite bilinear equality constraints. 
For problems whose bilinear terms are separable, that is, in which each variable appears in at most one bilinear term, \citet{gu2024sparse} proved that the SDP relaxation provides no benefit over the McCormick relaxation in terms of the~optimal~value.

Positive semidefinite matrix completion provides the framework of this paper.
Classical results relate the existence of positive definite completions to whether the graph of specified entries is chordal~\citep{grone1984completion}, and completion theory has since been used to exploit sparsity in semidefinite and polynomial optimization~\citep{fukuda2001exploiting,waki2006sums}.
\citet {naevdal1998cone} studied quantitative comparisons between the cone defined by positive semidefiniteness on the specified cliques and the cone of partial matrices with positive semidefinite completions from an eigenvalue perspective.

\subsection{Notation}

Let $\mathbb{S}^p$ denote the set of $p\times p$ real symmetric matrices.
For a matrix $\bm{A}$, we denote its $(i,j)$-entry by $A_{ij}$, and for $C\subseteq\{1,\ldots,p\}$ and $\bm{A}\in\mathbb{S}^p$, we write $\bm{A}_{CC}$ for the principal submatrix of $\bm{A}$ indexed by $C$.
For $\bm{A}\in\mathbb{S}^p$, the notation $\bm{A}\succeq\bm{O}$ means that $\bm{A}$ is positive semidefinite, and $\operatorname{diag}(\bm{A})$ denotes the vector of its diagonal entries.
For $\bm{A},\bm{B}\in\mathbb{S}^p$, we write $\bm{A}\bullet\bm{B}\coloneqq\sum_{i}\sum_{j}A_{ij}B_{ij}$, and for $\bm{a}\in\mathbb{R}^p$, we write $\|\bm{a}\|$ for the Euclidean norm.
We write $\bm{O}$, $\bm{I}$, $\bm{0}$, and $\bm{1}$ for the zero matrix, the identity matrix, the zero vector, and the all-ones vector, whose sizes are clear from the context.
For positive integers $n_1$ and $n_2$, we denote by $K_{n_1,n_2}$ the complete bipartite graph with two disjoint vertex sets of sizes $n_1$ and $n_2$, and by $K_n$ the complete graph on $n$ vertices.

\section{Positive semidefinite matrix completion}
\label{sec:completion}

This section summarizes the notions and the known result on positive semidefinite matrix completion used in this paper.
Let $G(V,E)$ be an undirected graph with no loops on the vertex set $V\coloneqq\{1,\ldots,n\}$, where we identify $(i,j)\in E$ with $(j,i)\in E$, and let
\[
    E^{\bullet}\coloneqq E\cup\{(i,i):i\in V\}.
\]
A symmetric matrix of order $n$ is called a \emph{partial symmetric matrix} if the entries in $E^{\bullet}$ are specified and the others are unspecified, and the set of such matrices is denoted by $\mathbb{S}^n(E^{\bullet},?)$, following \citet{fukuda2001exploiting}.
A \emph{completion} of $\bm{X}\in\mathbb{S}^n(E^{\bullet},?)$ is a symmetric matrix $\bar{\bm{X}}\in\mathbb{S}^n$ with $\bar{X}_{ij}=X_{ij}$ for $(i,j)\in E^{\bullet}$, and it is called a \emph{positive semidefinite completion} if $\bar{\bm{X}}\succeq\bm{O}$.
We write
\[
    \mathbb{S}^n_+(E^{\bullet},?)
    \coloneqq
    \bigl\{
        \bm{X}\in\mathbb{S}^n(E^{\bullet},?)
        \colon
        \bm{X} \text{ has a positive semidefinite completion}
    \bigr\}.
\]

We recall two notions from graph theory.
A graph is \emph{complete} if every pair of distinct vertices is connected by an edge, and a \emph{clique} of $G(V,E)$ is an induced subgraph that is complete.
A clique is \emph{maximal} if no other clique contains it.
In what follows, we call a vertex subset $C\subseteq V$ a clique of $G(V,E)$ whenever it induces a clique.
The graph $G(V,E)$ is \emph{chordal} if every cycle of length at least four has a chord, that is, an edge joining two vertices that are not consecutive on the cycle.
For every clique $C$ of $G(V,E)$ and every $\bm{X}\in\mathbb{S}^n(E^{\bullet},?)$, all entries of $\bm{X}_{CC}$ are specified, so $\bm{X}_{CC}\succeq\bm{O}$ holds for every clique $C$ whenever $\bm{X}\in\mathbb{S}^n_+(E^{\bullet},?)$.
The following classical result states that, for chordal graphs, the converse holds as well, and that it suffices to consider the maximal cliques.

\begin{theorem}[\citet{grone1984completion}]
    \label{thm:grone}
    Let $C_1,\ldots,C_{\ell}$ be the maximal cliques of a chordal graph $G(V,E)$.
    Then $\bm{X}\in\mathbb{S}^n(E^{\bullet},?)$ belongs to $\mathbb{S}^n_+(E^{\bullet},?)$ if and only if $\bm{X}_{C_rC_r}\succeq\bm{O}$ for every $r=1,\ldots,\ell$.
\end{theorem}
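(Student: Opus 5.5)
The necessity direction is immediate and needs no chordality: if $\bar{\bm{X}}\succeq\bm{O}$ is a completion of $\bm{X}$, then each principal submatrix $\bar{\bm{X}}_{C_rC_r}=\bm{X}_{C_rC_r}$ is positive semidefinite. All the work lies in sufficiency. I would prove it by induction on the number $\ell$ of maximal cliques, using the clique-tree (running intersection) structure of chordal graphs. Order the maximal cliques so that for each $r\ge 2$ there is $p(r)<r$ with $C_r\cap(C_1\cup\dots\cup C_{r-1})\subseteq C_{p(r)}$. Such an ordering exists for every chordal graph, for instance from a perfect elimination ordering.

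For the induction step, set $A\coloneqq (C_1\cup\dots\cup C_{r-1})\setminus C_r$, $S\coloneqq C_r\cap(C_1\cup\dots\cup C_{r-1})$ and $B\coloneqq C_r\setminus S$. By the induction hypothesis applied to the chordal subgraph induced on $C_1\cup\dots\cup C_{r-1}$, whose maximal cliques are $C_1,\dots,C_{r-1}$, there is a positive semidefinite completion $\bm{Y}$ on $A\cup S$. Separately, $\bm{X}_{C_rC_r}$ on $S\cup B$ is positive semidefinite by hypothesis. The two blocks agree on $S\times S$, because $S\subseteq C_{p(r)}$ and hence all of those entries are specified. Every pair in $A\times B$ is a non-edge, since any edge lies in some maximal clique and the ordering forbids such edges. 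So the problem reduces to the $3\times 3$ block completion lemma.

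\emph{Block completion lemma.} If
\[
\begin{pmatrix}\bm{P}&\bm{Q}\\ \bm{Q}^{\top}&\bm{R}\end{pmatrix}\succeq\bm{O}
\quad\text{and}\quad
\begin{pmatrix}\bm{R}&\bm{T}\\ \bm{T}^{\top}&\bm{U}\end{pmatrix}\succeq\bm{O},
\]
then setting the unknown block to $\bm{Q}\bm{R}^{\dagger}\bm{T}$, with $\bm{R}^{\dagger}$ the Moore--Penrose pseudoinverse, gives a positive semidefinite $3\times 3$ block matrix. To prove it, factor each $2\times 2$ block matrix as a Gram matrix with a shared factor for the $S$-part. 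Alternatively, use the generalized Schur complement. Positive semidefiniteness gives the range conditions $\bm{Q}=\bm{Q}\bm{R}^{\dagger}\bm{R}$ and $\bm{T}=\bm{R}\bm{R}^{\dagger}\bm{T}$. Then verify directly that the full matrix equals a block-diagonal positive semidefinite matrix, namely the two Schur complements $\bm{P}-\bm{Q}\bm{R}^{\dagger}\bm{Q}^{\top}$ and $\bm{U}-\bm{T}^{\top}\bm{R}^{\dagger}\bm{T}$ together with a $\bm{0}$ block on $S$, plus the rank-structured term
\[
\begin{pmatrix}\bm{Q}\\ \bm{R}\\ \bm{T}^{\top}\end{pmatrix}\bm{R}^{\dagger}\begin{pmatrix}\bm{Q}^{\top}&\bm{R}&\bm{T}\end{pmatrix}.
\]
Any entries outside $A\cup S\cup B$ stay unspecified for now and are handled at later steps. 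If $S=\emptyset$, the unknown block is simply zero.

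The main obstacle is this semidefinite case of the block lemma. With singular $\bm{R}$, the naive formula using $\bm{R}^{-1}$ fails, and the range conditions must be handled carefully. The other point needing justification is the existence of the running-intersection ordering, which I would quote as the standard characterization of chordal graphs via perfect elimination orderings.
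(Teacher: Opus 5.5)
The paper gives no proof of this theorem; it quotes it from \citet{grone1984completion}, so there is no in-paper argument to compare yours with. Your proposal is a correct, self-contained proof, but it takes a different route from the original one. Grone et al.\ work edge by edge. They enlarge the chordal graph to the complete graph by adding one edge at a time, keeping every intermediate graph chordal, so that each new edge lies in a single maximal clique. They then fill one scalar entry per step, which is the case $|A|=|B|=1$ of your block lemma. You instead go clique by clique along a running-intersection ordering and fill a whole block $A\times B$ at once with $\bm{Q}\bm{R}^{\dagger}\bm{T}$. This trades their chain-of-chordal-supergraphs lemma for the standard existence of a running-intersection (clique-tree) ordering. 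It takes only $\ell-1$ steps, gives an explicit closed-form completion, and handles singular $\bm{R}$ directly through the pseudoinverse and the range conditions. The edge-by-edge route, in exchange, has only one scalar unknown per step.

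Two details should be made explicit. First, agreement of the two blocks on $S\times S$ does not need the running-intersection property at all: $S\subseteq C_r$ already makes every entry of $S\times S$ specified. The property is really needed for the claim that no edge joins $A$ and $B$, and there it is the property of the \emph{later} cliques that matters:
\begin{itemize}
\item An edge $\{a,b\}$ with $a\in A$, $b\in B$ lies in some $C_t$.
\item We have $t>r$, because $b\notin C_1\cup\dots\cup C_{r-1}$ and $a\notin C_r$.
\item Then $\{a,b\}\subseteq C_t\cap(C_1\cup\dots\cup C_{t-1})\subseteq C_{p(t)}$.
\item Repeating this descent ends in some $C_s$ with $s\le r$, which is impossible.
\end{itemize}
Second, the same descent shows that every clique of the subgraph induced on $C_1\cup\dots\cup C_{r-1}$ lies in some $C_s$ with $s<r$. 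This is what justifies your claim that its maximal cliques are exactly $C_1,\ldots,C_{r-1}$. Alternatively, you can avoid that claim by building the completion iteratively on $C_1\cup\dots\cup C_r$ for $r=1,\ldots,\ell$. With these points written out, the argument is complete.
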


\section{Two relaxations for bilinear terms}
\label{sec:relaxations}

Let $\bm{x}\in\mathbb{R}^{n}$ be a vector of box-constrained variables indexed by $V$.
Let $E$ be the set of index pairs $(i,j)$ with $i\neq j$ for which the bilinear term $x_ix_j$ appears in the problem, so that each edge of $G(V,E)$ corresponds to such a term.
For each $(i,j)\in E$, let $w_{ij}$ be a lifted variable representing the product $x_ix_j$, and let $\bm{w}=(w_{ij})_{(i,j)\in E}\in\mathbb{R}^{|E|}$ collect these variables.
This paper is concerned with convex relaxations of the nonconvex set
\begin{equation}
    \mathcal{F}
    \coloneqq
    \left\{
        (\bm{x},\bm{w})
        \colon
        w_{ij}=x_ix_j~((i,j)\in E),~\bm{x}\in[-1,1]^{n}
    \right\}.
    \label{eq:bilinear-set}
\end{equation}
Throughout this paper, we assume without loss of generality that each component of $\bm{x}$ ranges over $[-1,1]$.
The only nonconvex constraints in \eqref{eq:bilinear-set} are the equalities $w_{ij}=x_ix_j$, and the two relaxations below differ in how these constraints are replaced.

For each $(i,j)\in E$, multiplying the bounds on $x_i$ and on $x_j$ in all four
sign combinations and replacing $x_ix_j$ by $w_{ij}$ gives the McCormick
inequalities~\citep{mccormick1976computability} as follows:
\begin{equation}
    \left\{
    \begin{aligned}
        w_{ij}&\geq x_i+x_j-1,\\
        w_{ij}&\geq -x_i-x_j-1,\\
        w_{ij}&\leq -x_i+x_j+1,\\
        w_{ij}&\leq x_i-x_j+1
    \end{aligned}
    \right.
    \qquad ((i,j)\in E).
    \label{eq:mc}
\end{equation}
The RLT relaxation of $\mathcal{F}$ is then formulated as
\begin{equation}
    \mathcal{F}_{\mathrm{RLT}}
    \coloneqq
    \left\{
        (\bm{x},\bm{w})
        \colon
            \bm{x}\in[-1,1]^{n}, \text{\Cref{eq:mc}}
    \right\}.
    \label{eq:rlt}
\end{equation}

For the second relaxation, we introduce a matrix variable
$\bm{W}\in\mathbb{S}^{n}$ representing $\bm{x}\bm{x}^{\top}$
and link it to the lifted variables by
\begin{equation}
    W_{ij}=w_{ij}\qquad ((i,j)\in E).
    \label{eq:w-link}
\end{equation}
With $\bm{x}$ fixed, we define the associated lifted matrix by
\begin{equation}
    \bm{M}(\bm{W})
    \coloneqq
    \begin{pmatrix}
        1 & \bm{x}^{\top}\\
        \bm{x} & \bm{W}
    \end{pmatrix}.
    \label{eq:lifted-matrix}
\end{equation}
Suppose that $(\bm{x},\bm{w})\in\mathcal{F}$ and $\bm{W}=\bm{x}\bm{x}^{\top}$. 
Then $\bm{M}(\bm{W})=(1,\bm{x}^{\top})^{\top}(1,\bm{x}^{\top})$ and is therefore positive semidefinite of rank one, and $W_{ii}=x_i^2\leq1$ for every $i\in V$.
Dropping the nonconvex rank-one condition leaves the convex constraint
\begin{equation}
    \bm{M}(\bm{W})\succeq\bm{O},
    \label{eq:psd}
\end{equation}
and the second property gives the diagonal RLT inequalities
\begin{equation}
    \operatorname{diag}(\bm{W})\leq\bm{1}.
    \label{eq:diag-rlt}
\end{equation}
The SDP-RLT relaxation is then defined by
\begin{equation}
    \mathcal{F}_{\mathrm{SDP\text{-}RLT}}
    \coloneqq
    \mathcal{F}_{\mathrm{RLT}}\cap
    \left\{
        (\bm{x},\bm{w})
        \colon
        \begin{aligned}
            &\exists\,\bm{W}\text{ satisfying}\\
            &\text{\Cref{eq:w-link,eq:psd,eq:diag-rlt}}
        \end{aligned}
    \right\}.
    \label{eq:sdp-rlt}
\end{equation}

\section{Forest--cycle characterization}
\label{sec:characterization}

This section proves the main result of this paper: $\mathcal{F}_{\mathrm{SDP\text{-}RLT}}=\mathcal{F}_{\mathrm{RLT}}$ holds if and only if $G(V,E)$ is a forest (\Cref{thm:forest-cycle}).
We first show that a point $(\bm{x},\bm{w})\in\mathcal{F}_{\mathrm{RLT}}$ belongs to $\mathcal{F}_{\mathrm{SDP\text{-}RLT}}$ if and only if a partial symmetric matrix determined by $(\bm{x},\bm{w})$ has a positive semidefinite completion.
We then show that such a completion exists at every point of $\mathcal{F}_{\mathrm{RLT}}$ if and only if $G(V,E)$ is a forest.

Taking the Schur complement of $\bm{M}(\bm{W})$, we have
\begin{equation}
    \bm{M}(\bm{W})\succeq\bm{O}
    \quad\Longleftrightarrow\quad
    \bm{W}-\bm{x}\bm{x}^{\top}\succeq\bm{O},
    \label{eq:schur-complement}
\end{equation}
and we write
$\bm{S}\coloneqq\bm{W}-\bm{x}\bm{x}^{\top}\in\mathbb{S}^n$.
From \Cref{eq:sdp-rlt,eq:schur-complement}, a point
$(\bm{x},\bm{w})\in\mathcal{F}_{\mathrm{RLT}}$
belongs to $\mathcal{F}_{\mathrm{SDP\text{-}RLT}}$
if and only if there is an $\bm{S}\succeq\bm{O}$ for which
\begin{equation}
    \bm{W}=\bm{S}+\bm{x}\bm{x}^{\top}
    \label{eq:w-from-s}
\end{equation}
satisfies \Cref{eq:w-link,eq:diag-rlt}.
We now examine how \Cref{eq:w-link,eq:diag-rlt}
constrain the entries of $\bm{S}$.
For a fixed $(\bm{x},\bm{w})\in\mathcal{F}_{\mathrm{RLT}}$,
\Cref{eq:w-link} determines the entries of $\bm{S}$
indexed by $E$ as follows:
\begin{equation}
    S_{ij}=w_{ij}-x_i x_j
    \qquad ((i,j)\in E),
    \label{eq:schur-entries}
\end{equation}
and \Cref{eq:diag-rlt} gives the upper bounds
$S_{ii}\le 1-x_i^2$ for $i\in V$ on the diagonal entries.
The other entries are free.
The diagonal entries are bounded above rather than fixed,
but they may be set to their upper bounds without affecting
the existence of $\bm{S}\succeq\bm{O}$,
as shown in the proof of the next lemma.
We therefore let
$\bm{S}^*\in\mathbb{S}^n(E^\bullet,?)$
be the partial symmetric matrix determined by
$(\bm{x},\bm{w})$, with specified entries
\begin{equation}
    \begin{aligned}
        S^*_{ii}&\coloneqq 1-x_i^2
            && (i\in V),\\
        S^*_{ij}&\coloneqq w_{ij}-x_i x_j
            && ((i,j)\in E).
    \end{aligned}
    \label{eq:partial-matrix}
\end{equation}
Thus, the diagonal entries of $\bm{S}^*$ are fixed;
only the off-diagonal entries outside $E$ are unspecified.
The following lemma characterizes membership in
$\mathcal{F}_{\mathrm{SDP\text{-}RLT}}$
as a completion problem for $\bm{S}^*$.

\begin{lemma}
    \label{lem:sdp-rlt-completion}
    For every
    $(\bm{x},\bm{w})\in\mathcal{F}_{\mathrm{RLT}}$,
    the condition
    $(\bm{x},\bm{w})\in\mathcal{F}_{\mathrm{SDP\text{-}RLT}}$
    is equivalent to
    $\bm{S}^*\in\mathbb{S}_+^n(E^\bullet,?)$.
\end{lemma}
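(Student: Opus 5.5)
We prove the two implications separately, using the reduction already recorded before the statement. By \Cref{eq:sdp-rlt,eq:schur-complement}, a point $(\bm{x},\bm{w})\in\mathcal{F}_{\mathrm{RLT}}$ lies in $\mathcal{F}_{\mathrm{SDP\text{-}RLT}}$ if and only if there exists $\bm{S}\succeq\bm{O}$ satisfying two conditions. First, its entries on $E$ are given by \Cref{eq:schur-entries}. Second, $S_{ii}\le 1-x_i^2$ for every $i\in V$. The only discrepancy with $\bm{S}^*\in\mathbb{S}^n_+(E^\bullet,?)$ is that the diagonal is bounded above rather than fixed, so the work is to pass between the inequality and the equality on the diagonal.

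For the direction ($\Leftarrow$), suppose $\bar{\bm{S}}$ is a positive semidefinite completion of $\bm{S}^*$. Set $\bm{S}\coloneqq\bar{\bm{S}}$ and $\bm{W}\coloneqq\bar{\bm{S}}+\bm{x}\bm{x}^\top$. Then $W_{ij}=w_{ij}$ on $E$ by \Cref{eq:partial-matrix}, which gives \Cref{eq:w-link}. We also have $W_{ii}=1-x_i^2+x_i^2=1$, which gives \Cref{eq:diag-rlt}. Finally, $\bm{S}\succeq\bm{O}$ yields \Cref{eq:psd} through \Cref{eq:schur-complement}. Hence $(\bm{x},\bm{w})\in\mathcal{F}_{\mathrm{SDP\text{-}RLT}}$.

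For the direction ($\Rightarrow$), take $\bm{W}$ satisfying \Cref{eq:w-link,eq:psd,eq:diag-rlt}, and let $\bm{S}=\bm{W}-\bm{x}\bm{x}^\top\succeq\bm{O}$. Define the diagonal matrix $\bm{D}$ by $D_{ii}\coloneqq 1-x_i^2-S_{ii}$, which is nonnegative by \Cref{eq:diag-rlt}. Then $\bm{D}\succeq\bm{O}$, so $\bar{\bm{S}}\coloneqq\bm{S}+\bm{D}\succeq\bm{O}$ as a sum of positive semidefinite matrices. The matrix $\bar{\bm{S}}$ has diagonal entries $1-x_i^2$ and agrees with $\bm{S}$ off the diagonal, so its entries on $E$ equal $w_{ij}-x_ix_j$. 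Thus $\bar{\bm{S}}$ is a positive semidefinite completion of $\bm{S}^*$.

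The only step requiring an idea is this diagonal shift: raising diagonal entries preserves positive semidefiniteness. It is immediate once one adds a nonnegative diagonal matrix. Note also that $1-x_i^2\ge 0$ because $\bm{x}\in[-1,1]^n$, so $\bm{S}^*$ has a nonnegative diagonal, although the argument does not need this separately.
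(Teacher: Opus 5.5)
Your proposal is correct and follows the paper's proof essentially step for step. In the forward direction you add the nonnegative diagonal matrix with entries $1-x_i^2-S_{ii}$ to raise the diagonal to its upper bound. In the converse you set $\bm{W}=\bar{\bm{S}}+\bm{x}\bm{x}^{\top}$ and verify the linking, diagonal RLT, and positive semidefinite constraints through the Schur complement.
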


\begin{proof}
    Suppose that
    $(\bm{x},\bm{w})\in\mathcal{F}_{\mathrm{SDP\text{-}RLT}}$.
    Then there is an $\bm{S}\succeq\bm{O}$
    whose entries in $E$ are given by \Cref{eq:schur-entries}
    and whose diagonal entries satisfy
    $S_{ii}\le 1-x_i^2$ for $i\in V$.
    Let $\bm{D}$ be the diagonal matrix with entries
    $1-x_i^2-S_{ii}$ for $i\in V$.
    Then $\bm{D}\succeq\bm{O}$,
    so $\bm{S}+\bm{D}\succeq\bm{O}$,
    and adding $\bm{D}$ changes no off-diagonal entry.
    Moreover,
    $S_{ii}+D_{ii}=1-x_i^2=S^*_{ii}$
    for $i\in V$.
    Hence $\bm{S}+\bm{D}$ is a positive semidefinite
    completion of $\bm{S}^*$, and
    $\bm{S}^*\in\mathbb{S}_+^n(E^\bullet,?)$.

    Conversely, let $\bar{\bm{S}}\succeq\bm{O}$
    be a completion of $\bm{S}^*$.
    In particular,
    $\bar{S}_{ii}=S^*_{ii}=1-x_i^2$ for $i\in V$.
    Then $\bm{W}=\bar{\bm{S}}+\bm{x}\bm{x}^{\top}$
    satisfies \Cref{eq:w-link,eq:psd,eq:diag-rlt},
    and  we have $(\bm{x},\bm{w})\in\mathcal{F}_{\mathrm{SDP\text{-}RLT}}$.
\end{proof}

For each $(i,j)\in E$, the vertex set $\{i,j\}$
is a clique of $G(V,E)$, so the $2\times2$ principal
submatrix of $\bm{S}^*$ indexed by $\{i,j\}$
must be positive semidefinite for
$\bm{S}^*\in\mathbb{S}_+^n(E^\bullet,?)$ to hold.
The next lemma shows that this condition is always satisfied.

\begin{lemma}
    \label{lem:edge-psd}
    For every $(\bm{x},\bm{w})\in\mathcal{F}_{\mathrm{RLT}}$
    and each $(i,j)\in E$, the $2\times2$ principal submatrix
    of $\bm{S}^*$ indexed by $\{i,j\}$ satisfies
    \begin{equation}
        \bm{S}^*_{\{i,j\}\{i,j\}}
        =
        \begin{pmatrix}
            1-x_i^2 & w_{ij}-x_i x_j\\
            w_{ij}-x_i x_j & 1-x_j^2
        \end{pmatrix}
        \succeq\bm{O}.
        \label{eq:edge-psd}
    \end{equation}
\end{lemma}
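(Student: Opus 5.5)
A $2\times2$ symmetric matrix is positive semidefinite exactly when its diagonal entries are nonnegative and its determinant is nonnegative. I will verify these two conditions directly from the McCormick inequalities \eqref{eq:mc} and the box constraints.

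The diagonal entries are immediate: $\bm{x}\in[-1,1]^n$ gives $1-x_i^2\ge 0$ and $1-x_j^2\ge 0$. For the determinant, write $s\coloneqq w_{ij}-x_ix_j$. It suffices to show $s^2\le(1-x_i^2)(1-x_j^2)$, that is, $|s|\le\sqrt{(1-x_i^2)(1-x_j^2)}$.

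The four McCormick inequalities rearrange into bounds on $s$. The two lower bounds read
\[
s\ge x_i+x_j-1-x_ix_j=-(1-x_i)(1-x_j),\qquad s\ge -x_i-x_j-1-x_ix_j=-(1+x_i)(1+x_j).
\]
The two upper bounds read
\[
s\le (1+x_i)(1-x_j),\qquad s\le (1-x_i)(1+x_j).
\]
Set $a_\pm\coloneqq 1\pm x_i\ge 0$ and $b_\pm\coloneqq 1\pm x_j\ge 0$. Then
\[
-\min(a_-b_-,\,a_+b_+)\le s\le \min(a_+b_-,\,a_-b_+).
\]

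The only nonroutine step is to compare these minima with the geometric mean. The key identity is
\[
(a_-b_-)(a_+b_+)=(a_+b_-)(a_-b_+)=(1-x_i^2)(1-x_j^2).
\]
The minimum of two nonnegative numbers is at most the square root of their product. Hence both the upper bound on $s$ and the magnitude of the lower bound are at most $\sqrt{(1-x_i^2)(1-x_j^2)}$. This gives $|s|\le\sqrt{(1-x_i^2)(1-x_j^2)}$, so the determinant is nonnegative and \eqref{eq:edge-psd} holds. I do not expect any real obstacle beyond noticing this product identity; boundary cases with $x_i=\pm1$ are covered automatically, since the bound then forces $s=0$.
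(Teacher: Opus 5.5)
Your proof is correct and follows essentially the same route as the paper: you get nonnegative diagonal entries from the box constraints, bound $|w_{ij}-x_ix_j|$ by the minima of the McCormick-derived products, and apply $\min\{a,b\}\le\sqrt{ab}$ to obtain a nonnegative determinant.
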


\begin{proof}
    By \Cref{eq:partial-matrix} and $x_i,x_j\in[-1,1]$,
    the diagonal entries $1-x_i^2$ and $1-x_j^2$
    are nonnegative.
    Thus, it suffices to show that the determinant
    is nonnegative, that is,
    \[
        |w_{ij}-x_i x_j|
        \le \sqrt{(1-x_i^2)(1-x_j^2)}.
    \]
    By \Cref{eq:mc}, the upper bounds on $w_{ij}$ give
    \[
        w_{ij}-x_i x_j
        \le \min\{(1-x_i)(1+x_j),(1+x_i)(1-x_j)\}
        \le \sqrt{(1-x_i^2)(1-x_j^2)},
    \]
    and the lower bounds give
    \[
        x_i x_j-w_{ij}
        \le \min\{(1+x_i)(1+x_j),(1-x_i)(1-x_j)\}
        \le \sqrt{(1-x_i^2)(1-x_j^2)},
    \]
    where each second inequality follows from
    $\min\{a,b\}\le\sqrt{ab}$ for $a,b\ge0$.
\end{proof}

We now state the main result.

\begin{theorem}[Forest--cycle characterization]
    \label{thm:forest-cycle}
    $\mathcal{F}_{\mathrm{SDP\text{-}RLT}}
    =\mathcal{F}_{\mathrm{RLT}}$
    holds if and only if $G(V,E)$ is a forest.
\end{theorem}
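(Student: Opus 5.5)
By \Cref{lem:sdp-rlt-completion}, the theorem amounts to the following claim: $\bm{S}^*\in\mathbb{S}^n_+(E^\bullet,?)$ holds for every $(\bm{x},\bm{w})\in\mathcal{F}_{\mathrm{RLT}}$ if and only if $G(V,E)$ is a forest. I will prove the two directions separately.

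\textbf{Forest case.} Suppose $G(V,E)$ is a forest. It has no cycles at all, so it is trivially chordal. Its maximal cliques are its edges $\{i,j\}$ together with its isolated vertices $\{i\}$.
\begin{itemize}
    \item For an edge $\{i,j\}$, \Cref{lem:edge-psd} gives $\bm{S}^*_{\{i,j\}\{i,j\}}\succeq\bm{O}$.
    \item For an isolated vertex $\{i\}$, the submatrix is the $1\times1$ matrix $1-x_i^2\ge0$.
\end{itemize}
Hence every maximal clique gives a positive semidefinite block, and \Cref{thm:grone} yields a positive semidefinite completion of $\bm{S}^*$. By \Cref{lem:sdp-rlt-completion}, every RLT-feasible point is SDP-RLT-feasible.

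\textbf{Cycle case.} Suppose $G$ contains a cycle. Choose a shortest cycle $v_1,\dots,v_k$ with $k\ge3$; it is chordless. I build an RLT-feasible point with no positive semidefinite completion as follows.
\begin{itemize}
    \item Set $\bm{x}=\bm{0}$, so that $S^*_{ii}=1$ for all $i$.
    \item On the cycle edges $(v_1,v_2),\dots,(v_{k-1},v_k)$, set $w=1$.
    \item On the closing edge $(v_k,v_1)$, set $w=-1$.
    \item On every other edge of $E$, set $w=0$.
\end{itemize}
With $\bm{x}=\bm{0}$ the McCormick inequalities reduce to $|w_{ij}|\le1$, so the point lies in $\mathcal{F}_{\mathrm{RLT}}$.

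Now suppose a completion $\bar{\bm{S}}\succeq\bm{O}$ existed. Then $\bar{\bm{S}}$ is a Gram matrix of unit vectors $u_i$. For consecutive cycle vertices, $u_{v_t}^\top u_{v_{t+1}}=1$ with unit norms forces $u_{v_t}=u_{v_{t+1}}$. Propagating around the cycle gives $u_{v_1}=u_{v_k}$. But the closing edge requires $u_{v_k}^\top u_{v_1}=-1$, which forces $u_{v_k}=-u_{v_1}$. These two conclusions contradict each other.

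A cleaner way to state the same contradiction is through the test vector $\bm{a}$ supported on the cycle with $a_{v_t}=(-1)^t$ style signs, though the vector argument above already suffices. Only entries in $E^\bullet$ were used, so this holds for any choice of the unspecified entries. Thus $\bm{S}^*\notin\mathbb{S}^n_+(E^\bullet,?)$, and by \Cref{lem:sdp-rlt-completion} the point lies outside $\mathcal{F}_{\mathrm{SDP\text{-}RLT}}$.

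\textbf{Where the difficulty lies.} The only delicate step is choosing the witness point. The signs must be inconsistent around the cycle, giving an odd number of $-1$ entries, and every value must respect the McCormick bounds. Taking $\bm{x}=\bm{0}$ makes those bounds trivial. Chordlessness of the cycle is not actually needed: the argument uses only the cycle edges and ignores all other entries, so any cycle works.
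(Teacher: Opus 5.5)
Your proof is correct and follows the paper's proof essentially step for step: Grone's theorem with Lemma~\ref{lem:edge-psd} for the forest direction. For a cycle, you use the same point $\bm{x}=\bm{0}$ with $w=1$ on all cycle edges but the closing one, $w=-1$ on the closing edge, and $w=0$ elsewhere, and you refute a completion with the same unit Gram-vector argument (restricting to a shortest cycle is harmless and, as you say, unnecessary).

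Drop the aside about a single test vector $\bm{a}$ supported on the cycle. On a chordless cycle with $k\ge4$, such as your shortest cycle, $\bm{a}^{\top}\bar{\bm{S}}\bm{a}$ involves the unspecified entries between non-consecutive cycle vertices, so it cannot by itself certify that no positive semidefinite completion exists. Your Gram-vector argument does not rely on it, so the proof is unaffected.
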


\begin{proof}
    First, suppose that $G(V,E)$ is a forest.
    We show
    $\mathcal{F}_{\mathrm{SDP\text{-}RLT}}
    =\mathcal{F}_{\mathrm{RLT}}$.
    By \Cref{eq:sdp-rlt} and Lemma~\ref{lem:sdp-rlt-completion},
    it suffices to show
    $\bm{S}^*\in\mathbb{S}_+^n(E^\bullet,?)$
    for every
    $(\bm{x},\bm{w})\in\mathcal{F}_{\mathrm{RLT}}$.
    Thus, let
    $(\bm{x},\bm{w})\in\mathcal{F}_{\mathrm{RLT}}$
    be fixed.
    A forest is chordal, and its maximal cliques are the pairs of adjacent vertices and the isolated vertices.
    For an edge $(i,j)\in E$, the corresponding submatrix
    of $\bm{S}^*$ indexed by $i$ and $j$ is positive semidefinite
    by Lemma~\ref{lem:edge-psd}.
    For an isolated vertex $i$, the corresponding submatrix
    is the single diagonal entry $S^*_{ii}=1-x_i^2\ge0$.
    Hence \Cref{thm:grone} gives
    $\bm{S}^*\in\mathbb{S}_+^n(E^\bullet,?)$.

    Next, we show the converse by contraposition:
    if $G(V,E)$ is not a forest, then
    $\mathcal{F}_{\mathrm{SDP\text{-}RLT}}
    \subsetneq\mathcal{F}_{\mathrm{RLT}}$.
    Suppose that $G(V,E)$ is not a forest,
    and let $i_1,i_2,\ldots,i_k$ with $k\ge3$
    be distinct vertices such that
    $(i_\ell,i_{\ell+1})\in E$ for $\ell=1,\ldots,k-1$
    and $(i_k,i_1)\in E$, that is, they form a cycle
    of $G(V,E)$.
    We construct a point of $\mathcal{F}_{\mathrm{RLT}}$
    that does not belong to
    $\mathcal{F}_{\mathrm{SDP\text{-}RLT}}$.
    Let $\bm{x}=\bm{0}$, and
    \begin{equation}
        \label{eq:cycle-point}
        w_{i_\ell i_{\ell+1}}=1
        \quad (\ell=1,\ldots,k-1),
        \qquad
        w_{i_k i_1}=-1,
    \end{equation}
    and $w_{ij}=0$ for every edge $(i,j)\in E$
    that does not belong to the cycle.
    A similar construction was used by
    \citet[proof of Theorem~7]{grone1984completion}.
    At $\bm{x}=\bm{0}$ the McCormick inequalities
    \eqref{eq:mc} reduce to $-1\le w_{ij}\le1$,
    and hence
    $(\bm{x},\bm{w})\in\mathcal{F}_{\mathrm{RLT}}$.
    For this point, \Cref{eq:partial-matrix} gives
    $S^*_{ii}=1$ for $i\in V$ and
    $S^*_{ij}=w_{ij}$ for $(i,j)\in E$.

    For the point constructed above, we show
    $\bm{S}^*\notin\mathbb{S}_+^n(E^\bullet,?)$
    by contradiction.
    Suppose that
    $\bm{S}^*\in\mathbb{S}_+^n(E^\bullet,?)$,
    and let $\bar{\bm{S}}$ be a positive semidefinite
    completion of $\bm{S}^*$.
    Then there is a matrix
    $\bm{U}\in\mathbb{R}^{n\times n}$ satisfying
    $\bar{\bm{S}}=\bm{U}^{\top}\bm{U}$,
    and we write the columns of $\bm{U}$ as
    $\bm{u}_i$ for $i\in V$.
    Since $\bar{\bm{S}}$ is a completion of $\bm{S}^*$,
    whose diagonal entries are all one, we have
    $\|\bm{u}_i\|=1$ for $i\in V$, and
    $\bm{u}_i^{\top}\bm{u}_j=w_{ij}$ for $(i,j)\in E$.
    Focusing on the edges of the cycle,
    \Cref{eq:cycle-point} gives
    \[
        \bm{u}_{i_1}=\bm{u}_{i_2}=\cdots=\bm{u}_{i_k}
        \qquad\text{and}\qquad
        \bm{u}_{i_k}=-\bm{u}_{i_1}.
    \]
    Therefore, we have
    $\bm{u}_{i_1}=-\bm{u}_{i_1}$,
    that is, $\bm{u}_{i_1}=\bm{0}$,
    which contradicts $\|\bm{u}_{i_1}\|=1$.
    Hence $\bm{S}^*\notin\mathbb{S}_+^n(E^\bullet,?)$,
    and Lemma~\ref{lem:sdp-rlt-completion} gives
    $(\bm{x},\bm{w})\notin
    \mathcal{F}_{\mathrm{SDP\text{-}RLT}}$,
    which completes the proof.
\end{proof}

\begin{remark}
    \label{rem:polyhedral}
    The equality $\mathcal{F}_{\mathrm{SDP\text{-}RLT}}=\mathcal{F}_{\mathrm{RLT}}$ for a forest also follows from known results on the Boolean quadric polytope, giving an alternative proof of one direction of \Cref{thm:forest-cycle}.
    \citet{padberg1989boolean} showed that this polytope is described by the McCormick inequalities if and only if the graph is acyclic; see also \citet[Section~2.1]{Gupte2020}.
    Since a bilinear function is affine in each variable separately, this gives that $\mathcal{F}_{\mathrm{RLT}}$ coincides with the convex hull of $\mathcal{F}$ when $G(V,E)$ is a forest, and hence no convex relaxation of $\mathcal{F}$ can be strictly stronger than the RLT relaxation.
\end{remark}

\section{Quantifying the gap between the two relaxations}
\label{sec:hadamard-illustration}

\Cref{thm:forest-cycle} characterizes the graphs for which the positive semidefinite constraint strictly strengthens the RLT relaxation, but says nothing about how much the optimal value changes.
This section gives an instance for which the ratio between the two optimal values is large.

An undirected graph $G(V,E)$ is given, and we consider optimization problems whose bilinear terms are those indexed by $E$.
For $\bm{c}=(c_{ij})_{(i,j)\in E}\in\mathbb{R}^{|E|}$, consider the box-constrained bilinear optimization problem
\begin{equation}
    \max\left\{\sum_{(i,j)\in E}c_{ij}x_ix_j \colon \bm{x}\in[-1,1]^{n}\right\}.
    \label{eq:bilinear-max}
\end{equation}
Replacing each product $x_ix_j$ by $w_{ij}$ and relaxing the resulting problem, we define the optimal values
\begin{align}
    v_{\mathrm{R}}(\bm{c})
    &\coloneqq
    \max\left\{\sum_{(i,j)\in E}c_{ij}w_{ij} \colon (\bm{x},\bm{w})\in\mathcal{F}_{\mathrm{RLT}}\right\},
    \label{eq:value-r}\\
    v_{\mathrm{SR}}(\bm{c})
    &\coloneqq
    \max\left\{\sum_{(i,j)\in E}c_{ij}w_{ij} \colon (\bm{x},\bm{w})\in\mathcal{F}_{\mathrm{SDP\text{-}RLT}}\right\}.
    \label{eq:value-rs}
\end{align}

\begin{proposition}
    \label{prop:hadamard-gap}
    Suppose that $G(V,E)$ contains a complete bipartite subgraph with disjoint vertex sets $I_r,J_r\subseteq V$ satisfying $|I_r|=|J_r|=r\geq2$.
    Then there is a $\bm{c}\in\mathbb{R}^{|E|}$ with
    \[
        \frac{v_{\mathrm{R}}(\bm{c})}{v_{\mathrm{SR}}(\bm{c})}>\sqrt{r/2}.
    \]
\end{proposition}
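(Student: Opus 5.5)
We take $\bm{c}$ supported on the edges of the complete bipartite subgraph and built from a Hadamard matrix. Concretely, we take $c_{ij}=H_{ij}$ for $i\in I_r$, $j\in J_r$ and $c_{ij}=0$ for all other edges. Here $\bm{H}\in\{\pm1\}^{r\times r}$ is a matrix with orthogonal rows, $\bm{H}\bm{H}^{\top}=r\bm{I}$, indexed by $I_r\times J_r$. Hadamard matrices need not exist for every $r$, so we first plan to take $\bm{H}$ a Sylvester Hadamard matrix of order $m=2^{\lfloor\log_2 r\rfloor}$, with $m>r/2$, placed on an $m\times m$ sub-block of $I_r\times J_r$. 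Alternatively, any $\pm1$ matrix with spectral norm small relative to its entry sum would do. The target is then $v_{\mathrm{R}}(\bm{c})/v_{\mathrm{SR}}(\bm{c})\ge \sqrt{m}>\sqrt{r/2}$, which is exactly where the constant $\sqrt{r/2}$ comes from.

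\emph{Lower bound on $v_{\mathrm{R}}$.} We use the point $\bm{x}=\bm{0}$, where, as in the proof of \Cref{thm:forest-cycle}, the McCormick inequalities reduce to $-1\le w_{ij}\le 1$. Setting $w_{ij}=c_{ij}$ on the support, and $0$ elsewhere, is RLT-feasible and gives objective $\sum c_{ij}^2=m^2$, so $v_{\mathrm{R}}(\bm{c})\ge m^2$. Indeed $v_{\mathrm{R}}(\bm{c})=m^2$.

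\emph{Upper bound on $v_{\mathrm{SR}}$.} Take any $(\bm{x},\bm{w})\in\mathcal{F}_{\mathrm{SDP\text{-}RLT}}$ with a certificate $\bm{W}$. By \Cref{lem:sdp-rlt-completion} we may assume $\bm{W}\succeq\bm{x}\bm{x}^{\top}\succeq\bm{O}$ and $W_{ii}\le1$. Write $\bm{W}=\bm{U}^{\top}\bm{U}$ with columns $\bm{u}_i$ satisfying $\|\bm{u}_i\|\le1$. The objective is
\[
\sum_{i\in I',j\in J'}H_{ij}\bm{u}_i^{\top}\bm{u}_j=\operatorname{tr}\bigl(\bm{U}_{I'}\bm{H}\bm{U}_{J'}^{\top}\bigr),
\]
where $I',J'$ index the $m\times m$ block. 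This is at most $\|\bm{H}\|_2\,\|\bm{U}_{I'}\|_F\,\|\bm{U}_{J'}\|_F\le\sqrt{m}\cdot\sqrt{m}\cdot\sqrt{m}=m^{3/2}$, using $\|\bm{H}\|_2=\sqrt{m}$ and $\|\bm{U}_{I'}\|_F^2=\sum_i\|\bm{u}_i\|^2\le m$. The same bound holds for $-\bm{c}$, so $v_{\mathrm{SR}}(\bm{c})\le m^{3/2}$. It is also positive, for instance via $\bm{x}=\bm{0}$ and $\bm{W}=\bm{I}$ after choosing a positive rank-one point such as $\bm{x}$ the signs from one row. This positivity is needed so that the ratio is well defined.

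Combining the two bounds gives a ratio of at least $m^2/m^{3/2}=\sqrt{m}$. The ratio must also be strict: since $m>r/2$ we get $\sqrt{m}>\sqrt{r/2}$. When $r$ is a power of two, however, $m=r$ and the ratio is $\sqrt{r}>\sqrt{r/2}$, so the claim holds in all cases. The main obstacle is the existence of the Hadamard matrix, which is handled by the power-of-two sub-block; the rest is the trace/spectral-norm inequality above.
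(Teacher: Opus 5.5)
Your proof is correct and is essentially the paper's argument: the same Hadamard block of order $m=2^{\lfloor\log_2 r\rfloor}$, the same RLT point at $\bm{x}=\bm{0}$, and an SDP bound that is the primal, Gram-vector form of the paper's dual certificate $\bm{Z}$ (both rest on $\|\bm{H}\|_2=\sqrt{m}$). The paper's scaling $c_{ij}=H_{ij}/m$ does not affect the ratio, and the paper additionally proves attainment $v_{\mathrm{SR}}(\bm{c})=\sqrt{m}$, whereas you only need $v_{\mathrm{SR}}(\bm{c})>0$. Your positivity sentence is muddled, since $\bm{x}=\bm{0}$ with $\bm{W}=\bm{I}$ gives value $0$; the rank-one point $x_{i_0}=1$, $x_j=H_{i_0j}$ for $j\in J'$, all other $x_k=0$, and $\bm{W}=\bm{x}\bm{x}^{\top}$ gives value $m>0$, which suffices.
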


\begin{proof}
   Let $m\coloneqq2^{\lfloor\log_2 r\rfloor}$, so that $r/2<m\leq r$, and let $\bm{H}\in\{-1,1\}^{m\times m}$ denote a Hadamard matrix of order $m$, that is, $\bm{H}\bm{H}^{\top}=m\bm{I}$, which exists by the Sylvester construction. 
   Take subsets $I_m\subseteq I_r$ and $J_m\subseteq J_r$ with $|I_m|=|J_m|=m$, and index the rows of $\bm{H}$ by $I_m$ and its columns by $J_m$, so that $H_{ij}$ is defined for $i\in I_m$ and $j\in J_m$.
   Let $\bm{c}$ be given by
    \begin{equation}
        c_{ij}=H_{ij}/m \quad (i\in I_m,\ j\in J_m),
        \qquad
        c_{ij}=0 \quad (\text{otherwise}).
        \label{eq:hadamard-objective}
    \end{equation}
    For this $\bm{c}$, we show that $v_{\mathrm{R}}(\bm{c})=m$ and $v_{\mathrm{SR}}(\bm{c})=\sqrt m$, which gives $v_{\mathrm{R}}(\bm{c})/v_{\mathrm{SR}}(\bm{c})=\sqrt m>\sqrt{r/2}$.
    We first prove $v_{\mathrm{R}}(\bm{c})=m$.
        Every $(\bm{x},\bm{w})\in\mathcal{F}_{\mathrm{RLT}}$ satisfies $|w_{ij}|\leq1$ by \Cref{eq:mc}, and $|c_{ij}|=1/m$ for $i\in I_m$ and $j\in J_m$, so the objective is at most $m$ and we have  $v_{\mathrm{R}}(\bm{c})\leq m$.
    To see that this bound is attained, let $\bar{\bm{x}}=\bm{0}$ and
    \[
        \bar{w}_{ij}=H_{ij} \quad (i\in I_m,\ j\in J_m),
        \qquad
        \bar{w}_{ij}=0 \quad (\text{otherwise}).
    \]
    At $\bar{\bm{x}}=\bm{0}$, the McCormick inequalities~\eqref{eq:mc} reduce to $-1\leq\bar{w}_{ij}\leq1$, and hence $(\bar{\bm{x}},\bar{\bm{w}})\in\mathcal{F}_{\mathrm{RLT}}$.
    Since $H_{ij}^2=1$, the objective value at this point is $\frac{1}{m}\sum_{i\in I_m}\sum_{j\in J_m}H_{ij}^2=m$.

        We next prove $v_{\mathrm{SR}}(\bm{c})=\sqrt m$.
    Let $(\bm{x},\bm{w})\in\mathcal{F}_{\mathrm{SDP\text{-}RLT}}$, and let $\bm{W}$ satisfy \Cref{eq:w-link,eq:psd,eq:diag-rlt}.
    Let $\bm{Z}\in\mathbb{S}^n$ be given by
    \begin{equation}
        Z_{ii}=\frac{1}{2m}~(i\in I_m\cup J_m),
        \qquad
        Z_{ij}=Z_{ji}=-\frac{H_{ij}}{2m\sqrt m}~(i\in I_m,\ j\in J_m),
        \label{eq:hadamard-certificate}
    \end{equation}
    and all other entries equal to zero.
    Its principal submatrix indexed by $I_m\cup J_m$, with $I_m$ ordered before $J_m$, is
    \[
        \frac{1}{2m}
        \begin{pmatrix}
            \bm{I}&-\bm{H}/\sqrt m\\
            -\bm{H}^{\top}/\sqrt m&\bm{I}
        \end{pmatrix},
    \]
    which is positive semidefinite since $\bm{H}/\sqrt m$ is orthogonal.
    All entries of $\bm{Z}$ outside this submatrix are zero, and hence $\bm{Z}\succeq\bm{O}$. 
    Since $\bm{M}(\bm{W})\succeq\bm{O}$ by \Cref{eq:psd}, its principal submatrix $\bm{W}$ is also positive semidefinite, and hence $\bm{Z}\bullet\bm{W}\geq0$.
    By \Cref{eq:w-link,eq:diag-rlt,eq:hadamard-objective}, we have
    \[
        \bm{Z}\bullet\bm{W}
        =\frac{1}{2m}\sum_{i\in I_m\cup J_m}W_{ii}
        -\frac{1}{\sqrt m}\sum_{(i,j)\in E}c_{ij}w_{ij}
        \leq1-\frac{1}{\sqrt m}\sum_{(i,j)\in E}c_{ij}w_{ij},
    \]
    and hence $\sum_{(i,j)\in E}c_{ij}w_{ij}\leq\sqrt m$, which gives $v_{\mathrm{SR}}(\bm{c})\leq\sqrt m$.
    To see that this bound is attained, let $\bar{\bm{x}}=\bm{0}$ and
    \[
        \bar{w}_{ij}=H_{ij}/\sqrt m \quad (i\in I_m,\ j\in J_m),
        \qquad
        \bar{w}_{ij}=0 \quad (\text{otherwise}).
    \]
        Let $\bar{\bm{W}}\in\mathbb{S}^n$ have diagonal entries equal to one, $\bar{W}_{ij}=\bar{W}_{ji}=H_{ij}/\sqrt m$ for $i\in I_m$ and $j\in J_m$, and all other off-diagonal entries equal to zero.
    Then $\bar{W}_{ij}=0$ whenever $i\in I_m\cup J_m$ and $j\notin I_m\cup J_m$, so $\bar{\bm{W}}$ is block diagonal, with the block indexed by $I_m\cup J_m$ equal to
    \[
        \begin{pmatrix}
            \bm{I}&\bm{H}/\sqrt m\\
            \bm{H}^{\top}/\sqrt m&\bm{I}
        \end{pmatrix}
    \]
    when $I_m$ is ordered before $J_m$, and the other block is equal to the identity matrix.
    The former is positive semidefinite since $\bm{H}/\sqrt m$ is orthogonal, and hence $\bar{\bm{W}}\succeq\bm{O}$.

    Since $\bar{\bm x}=\bm 0$ and $|\bar w_{ij}|\le 1$ for every $(i,j)\in E$, the McCormick inequalities hold, and hence $(\bar{\bm x},\bar{\bm  w})\in\mathcal F_{\mathrm{RLT}}$.
    Thus \Cref{eq:w-link,eq:diag-rlt} hold and $\bm{M}(\bar{\bm{W}})\succeq\bm{O}$, so $(\bar{\bm{x}},\bar{\bm{w}})\in\mathcal{F}_{\mathrm{SDP\text{-}RLT}}$.
    Its objective value is $\frac{1}{m}\sum_{i\in I_m}\sum_{j\in J_m}H_{ij}^2/\sqrt m=\sqrt m$.
\end{proof}

\begin{remark}
    The equality $v_{\mathrm{SR}}(\bm{c})=\sqrt m$ established in the proof of Proposition \ref{prop:hadamard-gap} also follows from the bounds of \citet[Corollary~3.3, Lemma~3.5, and Section~6.1]{linial2007complexity}, and is therefore not new.
\end{remark}

\section{Conclusion}
\label{sec:conclusion}

We studied when the SDP-RLT relaxation is strictly stronger than the RLT relaxation for bilinear terms in box-constrained variables.
Membership in the SDP-RLT relaxation was reduced to a positive semidefinite completion problem whose specified entries correspond to the bilinear terms of the problem, and the feasible regions of the two relaxations were shown to coincide if and only if the graph formed by these terms is a forest.
For a forest, we obtain the completion from the classical result for chordal patterns; for a graph containing a cycle, we construct a point that is feasible for the RLT relaxation but whose partial matrix has no positive semidefinite completion. 
We also measured the resulting gap when the graph contains a complete bipartite subgraph.
For a bilinear objective given by a Hadamard matrix, the optimal values of the two relaxations differ by a factor that grows with the size of that subgraph.

\bibliography{sn-bibliography}

\end{document}